\newcommand{\ud}{\mathrm{d}}
\newcommand{\eins}{\mathds{1}}
\numberwithin{equation}{section}
\newtheorem{theorem}{Theorem}[section]
\newtheorem{lemma}[theorem]{Lemma}
\newtheorem{prop}[theorem]{Proposition}
\theoremstyle{definition}
\numberwithin{equation}{section}
\begin{document}

\thispagestyle{empty}

\vspace*{1cm}

\begin{center}

{\LARGE\bf A lower bound on the spectral gap of Schrödinger operators with weak potentials of compact support} \\

\vspace*{2cm}

{\large Joachim Kerner \footnote{E-mail address: {\tt Joachim.Kerner@fernuni-hagen.de}}}%

\vspace*{5mm}

Department of Mathematics and Computer Science\\
FernUniversit\"{a}t in Hagen\\
58084 Hagen\\
Germany\\

\end{center}

\vfill

\begin{abstract} In this paper we continue the study of the spectral gap of Schrödinger operators on large intervals and subject to Neumann boundary conditions. The main goal is to derive a lower bound on the spectral gap which is polynomial in the interval length. This bound is derived for a class of bounded potentials of compact support which are weak enough in a suitable sense.
\end{abstract}

\newpage


%
\section{Introduction}
In this paper we aim at estimating the spectral gap (or gap for short) of a Schrödinger operator on an interval of length $L > 0$ from below in the limit of $L$ going to infinity. The spectral gap, on the other hand, is defined as the distance between the first two eigenvalues and constitutes a classical quantity in the spectral theory of operators. For example, in \cite{AB,Abramovich,Lavine} and more recently in \cite{AK,ACH}, the spectral gap of the Laplacian (subject to certain self-adjoint boundary conditions) on a fixed interval was compared with the spectral gap of the Laplacian plus some additional potential on the same interval. It turns out that the spectral gap may increase or decrease, depending on specific properties of the potential considered; for example, as shown in \cite{Lavine}, the spectral gap for the (Dirichlet-) Laplacian always increases given the added potential is convex. Of course, a generic potential usually fails, e.g., to be convex and hence it seems rather difficult to control the spectral gap for a Schrödinger operator on a fixed interval. 

However, as discussed in the recent paper \cite{KT20}, the asymptotic behaviour of the gap can nevertheless be studied for a rather general class of potentials. A main finding of \cite{KT20} was that the spectral gap converges to zero strictly faster than the spectral gap of the free (Dirichlet-) Laplacian for potentials that decay fast enough at infinity. This holds, in particular, for potentials of compact support. Furthermore, in \cite{KT20} the authors also put forward the conjecture that the spectral gap cannot close faster than $\sim L^{-3}$ for bounded compactly supported potentials and this is indeed what motivates the present paper. More explicitly, we aim at providing a lower bound to the spectral gap of a Schrödinger operator for certain compactly supported potentials which is polynomial in the interval length. As a matter of fact, for the potentials considered, we are able to prove that the spectral gap cannot close faster than $\sim L^{-4}$. At this point it is important to stress that an explicit lower bound for the spectral gap was derived recently in \cite{KernerLowerBound} for a much larger class of potentials employing a Harnack-type inequality; this, however, led to a lower bound that is exponentially small in the interval length. Consequently, the main achievement of the present paper is to establish a polynomial bound for a rather general class of compactly supported potentials. In this sense, this paper provides a first step to approach the conjecture put forward in \cite{KT20}.

\section{The model and results}
We consider the Schrödinger operator 
\begin{equation}
h_L=-\frac{\ud^2}{\ud x^2}+v
\end{equation}
with a non-negative potential $v \in L^{\infty}(\mathbb{R})\cap L^{1}(\mathbb{R})$ on the interval $I=(-L/2,+L/2)$ with length $L > 0$; this implies that we are working on the Hilbert space $L^2(I)$. Furthermore, in order to obtain a self-adjoint operator, we impose Neumann boundary conditions at $x=\pm L/2$, i.e., we assume the derivative to vanish at the boundary. This operator has purely discrete spectrum with eigenvalues $0 \leq \lambda_0(L) \leq \lambda_1(L) \leq ...$; the ground state eigenfunction is non-degenerate and shall be denoted by $\varphi^L_0 \in L^2(I)$. We also introduce $k_0(L):=\sqrt{\lambda_0(L)}$.

The main object of interest in this paper is the spectral gap defined via
\begin{equation}
\Gamma_v(L) := \lambda_1(L)- \lambda_0(L)\ .
\end{equation}
Since the ground state is non-degenerate \cite{LiebLoss}, one has $\Gamma_v(L) > 0$ for every value $L > 0$. Furthermore, for potentials that decay fast enough at infinity (i.e. at least quadratically), $\Gamma_v(L)$ converges to zero like $\sim L^{-2}$ as $L \rightarrow \infty$; this follows from an adaptation of [Theorem~2.1,\cite{KT20}] to the case of Neumann boundary conditions. This holds, in particular, for potentials $v$ of compact support. 

Now, in a first result we derive a lower bound to $\Gamma_v(L)$ which only depends on the asymptotic behaviour of $k_0(L)$. The important aspect here is that asymptotic behaviour of the gap is reduced to studying the asymptotic behaviour of the ground state energy which is generally more accessible.

\begin{lemma}\label{LBXXX}Consider $h_L$ with a symmetric bounded potential $v(x)=v(-x)$, $v \geq 0$, whose support is $[-b,+b]$ for some $0 < b < \infty$. Furthermore, $v$ shall be strictly monotonically increasing on $[-b,-\varepsilon]$ for some $0 < \varepsilon \leq b$ and such that $\inf_{x \in [-\varepsilon,0]} v(x) > \gamma$ for some $\gamma > 0$. Assume in addition that $b^2\|v\|_{\infty} < 1/2$. Then
	\begin{equation*}
	\Gamma_v(L) \geq (1-2b^2\|v\|_{\infty})^2 \cdot \frac{\pi^2}{L^2}\cdot \cos^2\left(k_0(L)(L/2-b)\right)
	\end{equation*}
	holds for all $L$ large enough.
\end{lemma}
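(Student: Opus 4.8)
The plan is to pass to the half-interval $[0,L/2]$ and exploit the exact solvability of the problem on $[b,L/2]$, where $v\equiv 0$. First I would record the structural facts. Since $v$ is symmetric, every eigenfunction of $h_L$ is even or odd; the ground state $\varphi^L_0$ is positive, hence even, and by the oscillation theorem the first excited eigenfunction has exactly one zero, which by parity must sit at $x=0$. Thus $\lambda_0$ is the lowest eigenvalue of $h_L$ on $[0,L/2]$ with Neumann conditions at $0$ and $L/2$, while $\lambda_1$ is the lowest eigenvalue of the same operator with a Dirichlet condition at $0$ and Neumann at $L/2$. Writing $\lambda_i=k_i^2$, on $[b,L/2]$ the normalised eigenfunctions are $\varphi_i(x)=C_i\cos\big(k_i(x-L/2)\big)$, and positivity of $\varphi_0$ on $[b,L/2]$ and of $\varphi_1$ on $(0,L/2]$ forces the phases $\beta_i:=k_i(L/2-b)$ to lie in $(0,\pi/2)$. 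Monotonicity of the eigenvalues in the potential ($v\geq 0$) gives $\lambda_1\geq\pi^2/L^2$, the value at $v=0$, i.e. $k_1\geq\pi/L$.

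The key is an exact gap identity obtained from the Wronskian $W=\varphi_1'\varphi_0-\varphi_0'\varphi_1$. Since $W'=(\lambda_0-\lambda_1)\varphi_0\varphi_1$, the boundary terms at $L/2$ vanish (Neumann), and at $0$ only $\varphi_1'(0)\varphi_0(0)$ survives (as $\varphi_1(0)=\varphi_0'(0)=0$), integration over $[0,L/2]$ yields
\begin{equation*}
\Gamma_v(L)=\lambda_1-\lambda_0=\frac{\varphi_0(0)\,\varphi_1'(0)}{\int_0^{L/2}\varphi_0\varphi_1\,\ud x}.
\end{equation*}
Normalising both eigenfunctions in $L^2(0,L/2)$ and bounding the overlap by Cauchy--Schwarz, $\int_0^{L/2}\varphi_0\varphi_1\,\ud x\leq 1$, I obtain the clean lower bound $\Gamma_v(L)\geq\varphi_0(0)\,\varphi_1'(0)$. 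It then remains to bound the two boundary data from below.

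The boundary estimates are carried out on $[0,b]$ by integrating $\varphi_i''=(v-k_i^2)\varphi_i$ and using only $0\leq v\leq\|v\|_\infty$ together with $b^2\|v\|_\infty<1/2$. For the ground state, $\varphi_0$ is increasing on $[0,b]$ (here the monotonicity of $v$ and the bound $v\geq\gamma$ near the centre enter) and $\varphi_0(b)=C_0\cos\beta_0$, so integrating twice gives $\varphi_0(0)\geq(1-\tfrac12 b^2\|v\|_\infty)\,C_0\cos\beta_0$. For the excited state one controls the interior logarithmic derivative by Sturm comparison with the constant potential $\|v\|_\infty$; comparison with the exactly solvable constant-potential model, in which $\varphi_1'(0)=C_1\cos\beta_1\,\mu/\sinh(\mu b)$ with $\mu=\sqrt{\|v\|_\infty-k_1^2}$ and $\cos\beta_1\sim k_1 b$, reveals that $\cos\beta_1$ cancels against $1/\sinh(\mu b)\sim 1/b$, leaving $\varphi_1'(0)\geq(1-\tfrac12 b^2\|v\|_\infty)\,C_1 k_1$. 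The normalisation integrals on $[b,L/2]$ give $C_0,C_1\geq(2-o(1))/\sqrt L$. Combining these with $k_1\geq\pi/L$,
\begin{equation*}
\Gamma_v(L)\geq(1-2b^2\|v\|_\infty)^2\,\frac{4\pi}{L^2}\,\cos\beta_0\,(1-o(1)),
\end{equation*}
which, since $0<\cos\beta_0\leq 1$ and $4\pi>\pi^2$, dominates the claimed right-hand side $(1-2b^2\|v\|_\infty)^2\,\frac{\pi^2}{L^2}\cos^2\beta_0$ for all $L$ large enough.

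I expect the excited-state estimate $\varphi_1'(0)\gtrsim C_1 k_1$ to be the main obstacle: a naive bound retains a spurious factor $\cos\beta_1$ and only produces the order $\cos^2\beta_0$, so the delicate point is to exhibit the cancellation $\cos\beta_1\sim k_1 b$ quantitatively through the Sturm comparison, with the error controlled by $b^2\|v\|_\infty$. The remaining role of the hypotheses is structural: $v\geq\gamma$ near the centre keeps the interior logarithmic derivative of $\varphi_0$ bounded below by a positive constant, which forces $\beta_0\to\pi/2$ (hence $\cos\beta_0\to 0$) and makes the statement meaningful as $L\to\infty$, while $b^2\|v\|_\infty<1/2$ keeps all correction factors positive. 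Alternatively, the same bound follows from the ground-state representation $\Gamma_v(L)=\min_{u(0)=0}\big(\int_0^{L/2}\varphi_0^2(u')^2\,\ud x\big)/\big(\int_0^{L/2}\varphi_0^2 u^2\,\ud x\big)$ combined with a weighted Hardy inequality, the explicit outer weight producing the factor $\cos\beta_0$ through the integral $\int_0^{L/2}\varphi_0^2(x)\big(\int_0^x\varphi_0^{-2}(t)\,\ud t\big)\,\ud x$.
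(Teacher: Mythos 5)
Your argument is correct in outline and takes a genuinely different route from the paper. The paper never touches the first excited state: it invokes the Kirsch--Simon--type comparison theorem [Theorem~1.4, \emph{KirschGap}] with the free Neumann operator (gap $\pi^2/L^2$, constant ground state), which reduces everything to the ratio $\inf\varphi_0^L/\sup\varphi_0^L$; this ratio is then bounded below by $(1-2b^2\|v\|_\infty)\cos\bigl(k_0(L)(L/2-b)\bigr)$ using the explicit cosine form of $\varphi_0^L$ outside $\supp v$ and one integration of the eigenvalue equation across the support --- essentially your $\varphi_0(0)$ estimate, and the only place the hypothesis $b^2\|v\|_\infty<1/2$ enters. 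You instead use the parity decomposition, the exact Wronskian identity $\Gamma_v(L)=\varphi_0(0)\varphi_1'(0)/\int_0^{L/2}\varphi_0\varphi_1$, Cauchy--Schwarz, and separate boundary estimates for the two factors. This is self-contained (no external gap theorem) but costs you control of $\varphi_1$ near the centre; the step you rightly flag as delicate, $\varphi_1'(0)\geq(1-2b^2\|v\|_\infty)C_1k_1(1-o(1))$, does go through by directly integrating $\varphi_1''=(v-\lambda_1)\varphi_1$ over $[0,b]$ and absorbing $\max_{[0,b]}|\varphi_1'|$ via $b^2\|v\|_\infty<1/2$ (no Sturm comparison needed), since $\varphi_1'(b)=C_1k_1\sin\beta_1$ with $\sin\beta_1\to1$. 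The payoff is real: your bound is of order $L^{-2}\cos\beta_0$ rather than the lemma's $L^{-2}\cos^2\beta_0$, so combined with Proposition~\ref{PropConvergence} it would give $\Gamma_v(L)\gtrsim L^{-3}$ instead of $L^{-4}$ --- precisely the conjectured optimal rate from [KT20] that the paper says it only approaches. The one point to tighten before claiming this improvement is the monotonicity assertion for $\varphi_0$ on $[0,b]$ (not automatic on $[\varepsilon,b]$ where $v$ may dip below $\lambda_0$); replace it by the same self-improving bound $\max_{[0,b]}\varphi_0\leq\varphi_0(b)/(1-b^2\|v\|_\infty/2)$ that you use implicitly for $\varphi_1$.
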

\begin{proof} For convenience, we shift the problem and work on the interval $(0,L)$ instead. We set $\tilde{v}(x):=v(x-L/2)$. Note that the assumptions on the potential guarantee that the ground state attains its maximum at the boundary of $(0,L)$ and the minimum at $x=L/2$. In particular, from the eigenvalue equation and the fact that $\lambda_0(L) \rightarrow 0$ (compare with the proof of [Theorem~2.1,\cite{KT20}]), it follows that $\varphi^L_0$ has at most one turning point in $(-L/2,0)$ for all $L$ large enough.
	
	Now, the ground state eigenfunction $\varphi^L_0$ on $(0,L/2-b)$ is given by 
	\begin{equation*}
	\varphi^L_0(x)=A \cos(k_0 x)
	\end{equation*}
	with $A=\varphi^L_0(0)=\sup_{x \in I}\varphi^L_0(x) > 0$. On the other hand, using the eigenvalue equation and the symmetry of $\tilde{v}$ with respect to $x=L/2$, we obtain
	\begin{equation}\label{TurningPoint}
	\varphi^{\prime}(x)=\int_{x}^{0}(\lambda_0(L)-\tilde{v}(x))\varphi^L_0(x)\ \ud x\ , \quad x \in [-b,0]\ ,
	\end{equation}
	and this yields, for $x \in [-b,0]$,
	\begin{equation*}\begin{split}
	|\varphi^{\prime}(x)| &\leq b\|\lambda_0(L)-\tilde{v}\|_{\infty}\cdot \sup_{x \in [-b,0)}|\varphi^L_0(x)|\ , \\
	&= Ab\|\lambda_0(L)-\tilde{v}\|_{\infty}\cdot \cos\left(k_0(L)(L/2-b)\right)\ ,\\ 
	& \leq 2Ab\|\tilde{v}\|_{\infty}\cdot \cos\left(k_0(L)(L/2-b)\right)\ ,
	\end{split}
	\end{equation*}
	for all $L > 0$ large enough. This gives
	\begin{equation*}
	\inf_{x \in I}\varphi^L_0(x) \geq (1-2b^2\|\tilde{v}\|_{\infty})\cdot A\cos\left(k_0(L)(L/2-b)\right)\ .
	\end{equation*}
	Finally, [Theorem~1.4,\cite{KirschGap}] yields (taking into account that the ground state eigenfunction of the Neumann Laplacian with zero potential is the constant function $1/\sqrt{L}$)
	\begin{equation*}
	\Gamma_v(L) \geq \left(\frac{\inf_{x \in I}\varphi^L_0(x)}{\sup_{x \in I}\varphi^L_0(x)}\right)^2 \cdot \frac{\pi^2}{L^2} \ ,
	\end{equation*}
	and hence the statement follows readily; see also \cite{KernerLowerBound} for a related application of [Theorem~1.4,\cite{KirschGap}].
\end{proof}
In a next step we need to investigate the asymptotic behaviour of $k_0(L)$ as $L \rightarrow \infty$. For the proof we introduce the characteristic function $\eins_A(\cdot)$ of a measurable set $A \subset \mathbb{R}$.
\begin{prop}\label{PropConvergence} Consider $h_L$ with a symmetric bounded potential $v(x)=v(-x)$, $v \geq 0$, whose support is $[-b,+b]$ for some $0 < b < \infty$. Furthermore, $v$ shall be strictly monotonically increasing on $[-b,-\varepsilon]$ for some $0 < \varepsilon \leq b$ and such that $\inf_{x \in [-\varepsilon,0]} v(x) > \gamma$ for some $\gamma > 0$. Then, there exists a constant $\delta > 0$ such that
	\begin{equation}\label{EqI}
\frac{\delta}{L} \leq 	\left|\frac{\pi}{2}-k_0(L)L\left(\frac{1}{2}-\frac{b}{L}\right)\right|
	\end{equation}
	for all $L$ large enough.
\end{prop}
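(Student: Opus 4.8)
The plan is to pin down the asymptotics of $k_0(L)$ by matching the explicit free solution on $(-L/2,-b)$ to the interior solution of the eigenvalue equation across $x=-b$, and then to read off the rate at which $\theta:=k_0(L)(L/2-b)$ approaches $\pi/2$. First I would reduce to the half-interval: since $v$ is even and the ground state is non-degenerate, $\varphi^L_0$ is even, so $(\varphi^L_0)'(0)=0$, and on $(-L/2,-b)$ the Neumann condition forces $\varphi^L_0(x)=A\cos\!\big(k_0(L)(x+L/2)\big)$ with $A=\sup_I\varphi^L_0$. Positivity of the ground state on the free region requires the argument to stay below $\pi/2$, so $\theta\in(0,\pi/2)$; hence $|\pi/2-\theta|=\pi/2-\theta$ and it remains to bound $\pi/2-\theta$ from below. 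Writing $\varphi^L_0=m\,u$ on $[-b,0]$, where $u$ solves $u''=(v-\lambda_0(L))u$ with $u(0)=1$, $u'(0)=0$ and $m=\inf_I\varphi^L_0$, and matching logarithmic derivatives at $-b$ (equivalently, combining \eqref{TurningPoint} with the free data $(\varphi^L_0)'(-b)=-Ak_0(L)\sin\theta$, $\varphi^L_0(-b)=A\cos\theta$) gives the eigenvalue condition
\[
k_0(L)\tan\theta=\kappa_L,\qquad \kappa_L:=-\frac{u'(-b)}{u(-b)}\,.
\]
The scaling by $m$ cancels, so $\kappa_L$ depends only on the normalized interior solution.

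The heart of the argument is to show $0<\underline\kappa\le\kappa_L\le\bar\kappa$ for all large $L$, with $L$-independent constants. Since $\lambda_0(L)\to0$ (proof of [Theorem~2.1,\cite{KT20}]), for $L$ large we have $\lambda_0(L)<\gamma$, so on $[-\varepsilon,0]$ the function $u$ is convex with $u\ge u(0)=1$, and integrating $u''=(v-\lambda_0(L))u\ge\gamma-\lambda_0(L)$ yields $u'(-\varepsilon)\le-(\gamma-\lambda_0(L))\varepsilon<0$. On $[-b,-\varepsilon]$ the potential is non-negative but may dip below $\lambda_0(L)$ near $-b$; that defect is bounded by $\lambda_0(L)\,(b-\varepsilon)\sup u\to0$, so $u'(-b)\le-\gamma\varepsilon/2<0$ for large $L$, while $u(-b)$ and $|u'(-b)|$ lie between fixed positive constants by a Gronwall-type estimate using only $\|v\|_\infty$ and $b$. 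This furnishes $\underline\kappa$ and $\bar\kappa$. Here the hypotheses $v\ge0$ and $\inf_{[-\varepsilon,0]}v>\gamma$ are exactly what force $\underline\kappa>0$, while (as in the proof of Lemma~\ref{LBXXX}) the monotonicity of $v$ on $[-b,-\varepsilon]$ is used only to guarantee the single-well shape of the ground state that underlies $\theta\in(0,\pi/2)$.

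Finally I would extract the rate. From $\kappa_L\ge\underline\kappa>0$ and $k_0(L)\to0$ we get $\tan\theta=\kappa_L/k_0(L)\to\infty$, so $\theta\to\pi/2$; in particular $\theta\ge\pi/4$ eventually, whence $k_0(L)=\theta/(L/2-b)\ge c_1/L$ for some $c_1>0$ and all large $L$. Setting $\eta:=\pi/2-\theta$, the eigenvalue condition reads $\tan\eta=k_0(L)/\kappa_L$, and since $\eta\to0$ one has $\tan\eta\le2\eta$, so
\[
\eta\ \ge\ \tfrac12\tan\eta\ =\ \frac{k_0(L)}{2\kappa_L}\ \ge\ \frac{c_1}{2\bar\kappa}\cdot\frac1L\ =:\ \frac{\delta}{L}\,,
\]
which is precisely \eqref{EqI}.

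The main obstacle is the uniform (in $L$) two-sided control of $\kappa_L$, i.e.\ of the interior logarithmic derivative at $-b$. The delicate point is the lower bound $\kappa_L\ge\underline\kappa>0$: one must verify that the region near $-b$ where $v<\lambda_0(L)$ cannot reverse the sign of $u'(-b)$ nor erode its size. This is where the smallness of $\lambda_0(L)$ must be played off against the uniform positivity $\inf_{[-\varepsilon,0]}v>\gamma$, and getting the two estimates to coexist for all sufficiently large $L$ is the crux of the proof.
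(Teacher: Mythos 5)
Your argument is correct, but it follows a genuinely different route from the paper. The paper does not work with the actual potential at all: it replaces $v$ by the dominating step potential $\|v\|_\infty\,\eins_{[-b,b]}$, invokes monotonicity of the ground state energy in the potential to get $\tilde k_0(L)\ge k_0(L)$ (so that an upper bound on $\tilde k_0(L)(L/2-b)$ suffices), and then extracts the $1/L$ rate from the \emph{explicit} transcendental quantisation condition $\frac{M_0}{\omega_0}\tanh(M_0l_2)=\tan(\omega_0 l_1)$ taken from [Prop.~2.9, KT20], essentially by noting that $A=M_0l_1\tanh(M_0l_2)$ grows linearly in $L$ and that $\tan(\omega_0l_1)=A/(\omega_0l_1)$ forces $\pi/2-\omega_0l_1\gtrsim 1/A$. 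You instead derive the matching condition $k_0\tan\theta=\kappa_L$ for the true potential, with $\kappa_L$ the negative logarithmic derivative at $-b$ of the normalized interior solution, and prove the uniform two-sided bound $0<\underline\kappa\le\kappa_L\le\bar\kappa$ directly from the ODE (convexity on $[-\varepsilon,0]$ where $v>\gamma>\lambda_0(L)$, plus a Gronwall bound and $\lambda_0(L)\to0$ to control the region where $v$ may dip below $\lambda_0$). The mechanism at the end is the same in both proofs --- $\tan\theta\sim L$ forces $\pi/2-\theta\gtrsim 1/L$ --- but your version is more self-contained (no comparison operator, no imported formula) and makes visible that the monotonicity hypothesis on $[-b,-\varepsilon]$ is not actually needed for this proposition (positivity of the ground state already gives $\theta\in(0,\pi/2)$), whereas the paper's version is shorter once the step-potential secular equation is granted. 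You correctly identify the uniform lower bound on $\kappa_L$ as the crux; your sketch of it (playing $\inf_{[-\varepsilon,0]}v>\gamma$ against $\lambda_0(L)\to0$ and the a priori bound on $\sup_{[-b,0]}u$) is sound and complete enough to be filled in routinely.
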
 
\begin{proof} The idea is to compare $k_0(L)$ with the square root of the lowest eigenvalue for the Laplacian with a step-potential $\tilde{v}(x):=\tilde{v} \cdot \eins_{[-c,+c]}(x)$ with suitable $\tilde{v}, c > 0$. The square root of this eigenvalue shall be denoted by $\tilde{k}_0(L)$. 
	
	In a first step we realize that, adapting the proof of [Proposition~2.9,\cite{KT20}], the quantisation condition for the step-potential with Neumann boundary conditions reads
	\begin{equation}\label{QCond}
	\frac{M_0}{\omega_0(L)}\tanh(M_0l_2)=\tan(\omega_0(L)l_1)
	\end{equation}
	where $\omega_0(L)L:=\tilde{k}_0(L)$, $M_0:=\sqrt{L^2\tilde{v}-\omega^2_0(L)}$, $l_1:=\frac{1}{2}-\frac{c}{L}$ and $l_2:=\frac{c}{L}$.
	
	
	On the other hand, in order to prove \eqref{EqI} we choose $c:=b$ and $\tilde{v}=\|v\|_{\infty}$. Hence, $\tilde{k}_0(L) \geq k_0(L)$ and it is therefore enough to prove \eqref{EqI} with $\tilde{k}_0(L)$ instead. To do this, we start with the estimate
	\begin{equation*}\begin{split}
	\tan(\omega_0(L)l_1) &\geq \frac{1}{2\cos(\omega_0(L)l_1)}\ , \\
	&\geq \frac{1}{2(-\omega_0(L)l_1+\pi/2)}\ ,
	\end{split}
	\end{equation*}
	for $\omega_0(L)l_1 < \pi/2$ close enough to $\pi/2$. Furthermore, setting $A:=M_0l_1\tanh(M_0l_2)$ and via the relation
	\begin{equation*}\begin{split}
	\frac{1}{2(-x+\pi/2)}=\frac{A}{x}
	\end{split}
	\end{equation*}
	we obtain 
	\begin{equation}\begin{split}
	\omega_0(L)l_1 &\leq \frac{\pi A}{(1+2A)}\ , \\
	&=\frac{\pi }{2 (1+\frac{1}{2A})}\ .
	\end{split}
	\end{equation}
This now yields, for all $L$ large enough,
\begin{equation*}\begin{split}
\left|\frac{\pi}{2}-\omega_0(L)l_1\right|&\geq\frac{\pi}{2}\left|\frac{1}{2A(1+\frac{1}{2A})}\right|\ , \\
& \geq \frac{\pi}{8A}\ , \\
&\geq \frac{\delta}{L}\ ,
\end{split}
\end{equation*}
for some constant $\delta > 0$.
\end{proof}
Combining Lemma~\ref{LBXXX} and Proposition~\ref{PropConvergence} then yields the main statement of the paper.
\begin{theorem}[Lower bound spectral gap]\label{PolynBound} Consider $h_L$ with a symmetric bounded potential $v(x)=v(-x)$, $v \geq 0$, whose support is $[-b,+b]$ for some $0 < b < \infty$. Furthermore, $v$ shall be strictly monotonically increasing on $[-b,-\varepsilon]$ for some $0 < \varepsilon \leq b$ and such that $\inf_{x \in [-\varepsilon,0]} v(x) > \gamma$ for some $\gamma > 0$. Assume in addition that $b^2\|v\|_{\infty} < 1/2$. Then, there exists a constant $\beta > 0$ such that
	\begin{equation*}
	\Gamma_v(L) \geq \frac{\beta}{L^4}
	\end{equation*}
	for all $L$ large enough.
\end{theorem}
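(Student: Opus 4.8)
The plan is to combine the two preceding results directly. Lemma~\ref{LBXXX} gives
\[
\Gamma_v(L) \geq (1-2b^2\|v\|_{\infty})^2 \cdot \frac{\pi^2}{L^2}\cdot \cos^2\bigl(k_0(L)(L/2-b)\bigr)\ ,
\]
valid for all $L$ large enough, and the prefactor $(1-2b^2\|v\|_{\infty})^2$ is a fixed positive constant by the assumption $b^2\|v\|_{\infty} < 1/2$. Hence the entire problem reduces to bounding the cosine factor $\cos^2\bigl(k_0(L)(L/2-b)\bigr)$ from below, and this is exactly where Proposition~\ref{PropConvergence} enters. Note that the argument of the cosine is precisely $k_0(L)L(1/2 - b/L)$, which is the quantity appearing inside Proposition~\ref{PropConvergence}.

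The key step is to translate the lower bound on the \emph{distance to} $\pi/2$ provided by \eqref{EqI} into a lower bound on $\cos^2$. Writing $\theta_L := k_0(L)(L/2-b)$, Proposition~\ref{PropConvergence} states that $|\pi/2 - \theta_L| \geq \delta/L$ for all $L$ large. I would first argue that $\theta_L$ stays below $\pi/2$ and approaches it (which is consistent with the setup, since $k_0(L) \to 0$ and the ground state has at most one turning point, so the cosine remains positive on the relevant region and $\theta_L \nearrow \pi/2$). Thus $\pi/2 - \theta_L$ is a small positive quantity, and I can use the elementary estimate $\cos(\theta_L) = \sin(\pi/2 - \theta_L) \geq \tfrac{2}{\pi}(\pi/2 - \theta_L)$ for $0 \leq \pi/2 - \theta_L \leq \pi/2$ (concavity of sine on $[0,\pi/2]$). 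Combining with \eqref{EqI} gives
\[
\cos(\theta_L) \geq \frac{2}{\pi}\Bigl(\frac{\pi}{2}-\theta_L\Bigr) \geq \frac{2}{\pi}\cdot\frac{\delta}{L}\ ,
\]
so that $\cos^2(\theta_L) \geq (2\delta/\pi)^2 L^{-2}$.

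Plugging this into the Lemma's bound yields
\[
\Gamma_v(L) \geq (1-2b^2\|v\|_{\infty})^2 \cdot \frac{\pi^2}{L^2}\cdot \frac{4\delta^2}{\pi^2 L^2}
= (1-2b^2\|v\|_{\infty})^2 \cdot \frac{4\delta^2}{L^4}\ ,
\]
and setting $\beta := 4\delta^2 (1-2b^2\|v\|_{\infty})^2 > 0$ gives the claimed bound $\Gamma_v(L) \geq \beta/L^4$ for all $L$ large enough.

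The main obstacle I anticipate is the sign/monotonicity bookkeeping for $\theta_L$: one must be certain that $\pi/2 - \theta_L$ is positive (not merely that its absolute value is bounded below) so that the sine estimate applies with the correct orientation, and that $\theta_L$ lands in the regime $[0,\pi/2]$ where the concavity bound $\sin t \geq (2/\pi)t$ holds. This is precisely the content underlying the assumption that the ground state has at most one turning point and that $k_0(L)\to 0$, both of which pin down $\theta_L$ just below $\pi/2$; making this rigorous is the only nonroutine part, while the remaining algebra is immediate.
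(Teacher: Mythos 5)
Your proposal is correct and follows essentially the same route as the paper: feed Proposition~\ref{PropConvergence} into Lemma~\ref{LBXXX} and convert the $\delta/L$ distance from $\pi/2$ into a lower bound on the cosine via a linear estimate (you use $\sin t \geq \tfrac{2}{\pi}t$ on $[0,\pi/2]$, the paper uses $\sin x \geq -\tfrac12 x + \tfrac{\pi}{2}$ near $x=\pi$, which amounts to the same linearization). The sign issue you flag is genuine but is settled by the comparison in the proof of Proposition~\ref{PropConvergence}, where $k_0(L)(L/2-b) \leq \tilde{k}_0(L)(L/2-b) = \omega_0(L)l_1 < \pi/2$, so $\pi/2 - \theta_L$ is indeed positive.
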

\begin{proof} It remains to find a lower bound for the term 
	\begin{equation*}
	\cos^2\left(k_0(L)(L/2-b)\right)=\sin^2\left(\frac{\pi}{2}+k_0(L)(L/2-b)\right)
	\end{equation*}
	for large $L > 0$. Since $\sin(x) \geq -\frac{1}{2}x+\frac{\pi}{2}$ in a small neighbourhood around $x=\pi$, one obtains
	\begin{equation*}\begin{split}
	\left|\sin\left(\frac{\pi}{2}+k_0(L)(L/2-b)\right)\right| &\geq \left|-\frac{1}{2}\left(\frac{\pi}{2}+k_0(L)(L/2-b)\right) + \frac{\pi}{2}\right| \\
	&= \left|\frac{1}{2}\left(\pi-\frac{\pi}{2}-k_0(L)(L/2-b)\right)\right| \ 
	\end{split}
	\end{equation*}
	for all $x$ close enough to $\pi$. Taking the square one obtains
	\begin{equation*}
	\sin^2 \left(\frac{\pi}{2}+k_0(L)(L/2-b)\right)\geq\frac{1}{4} \left(\frac{\pi}{2}-k_0(L)L\left(\frac{1}{2}-\frac{b}{L}\right)\right)^2
	\end{equation*}
	The statement then follows with Proposition~\ref{PropConvergence}.
\end{proof}

\vspace*{0.5cm}

\subsection*{Acknowledgement}{}JK would like to thank M.~Täufer for helpful discussions.

\vspace*{0.5cm}

{\small
\bibliographystyle{amsalpha}
\bibliography{Literature}}

\def\cprime{$'$} \def\polhk#1{\setbox0=\hbox{#1}{\ooalign{\hidewidth
  \lower1.5ex\hbox{`}\hidewidth\crcr\unhbox0}}}
\providecommand{\bysame}{\leavevmode\hbox to3em{\hrulefill}\thinspace}
\providecommand{\MR}{\relax\ifhmode\unskip\space\fi MR }
\providecommand{\MRhref}[2]{%
  \href{http://www.ams.org/mathscinet-getitem?mr=#1}{#2}
}
\providecommand{\href}[2]{#2}
\begin{thebibliography}{Abr91}

\bibitem[AB89]{AB}
M.~S. Ashbaugh and R.~Benguria, \emph{Optimal lower bound for the gap between
  the first two eigenvalues of one-dimensional {S}chr\"{o}dinger operators with
  symmetric single-well potentials}, Proc. Amer. Math. Soc. \textbf{105}
  (1989), no.~2, 419--424.

\bibitem[Abr91]{Abramovich}
S.~Abramovich, \emph{The gap between the first two eigenvalues of a
  one-dimensional {S}chr\"{o}dinger operator with symmetric potential}, Proc.
  Amer. Math. Soc. \textbf{111} (1991), no.~2, 451--453.

\bibitem[ACH]{ACH}
B.~Andrews, J.~Clutterbuck, and D.~Hauer, \emph{{The fundamental gap for a
  one-dimensional Schrödinger operator with Robin boundary conditions}},
  arXiv:2002.06900.

\bibitem[AK20]{AK}
M.~S. Ashbaugh and D.~Kielty, \emph{Spectral gaps of 1-d {R}obin schrödinger
  operators with single-well potentials}, J.~Math.~Phys. \textbf{61} (2020),
  no.~9, 091507.

\bibitem[Ker]{KernerLowerBound}
J.~Kerner, \emph{{A lower bound on the spectral gap of one-dimensional
  Schrödinger operators on large intervals }}, arXiv:2102.03816.

\bibitem[KS87]{KirschGap}
W.~Kirsch and B.~Simon, \emph{Comparison theorems for the gap of
  {S}chr\"{o}dinger operators}, J. Funct. Anal. \textbf{75} (1987), no.~2,
  396--410.

\bibitem[KT]{KT20}
J.~Kerner and M.~T\"{a}ufer, \emph{{On the spectral gap of one-dimensional
  Schrödinger operators on large intervals}}, arXiv:2012.09060.

\bibitem[Lav94]{Lavine}
R.~Lavine, \emph{The eigenvalue gap for one-dimensional convex potentials},
  Proc. Amer. Math. Soc. \textbf{121} (1994), no.~3, 815--821.

\bibitem[LL01]{LiebLoss}
E.~L. Lieb and M.~Loss, \emph{Analysis}, American Mathematical Society, 2001.

\end{thebibliography}

\end{document}